\newtheoremstyle{normal}% normale Schrift
{5pt}% hSpace abovei
{5pt}% hSpace belowi
{\normalfont}% hBody fonti
{}% hIndent amounti1
{\bfseries}% hTheorem head fonti
{}% Punctuation after theorem headi
{0.4em}% hSpace after theorem headi2
{\bfseries{\thmname{#1}\thmnumber{ #2}.\thmnote{ \hspace{0.5em}(#3)\newline}}}% hTheorem head spec (can be left empty, meaning `normal')
\newtheoremstyle{kursiv}% kursive Schrift
{5pt}% hSpace abovei
{5pt}% hSpace belowi
{\itshape}% hBody fonti
{}% hIndent amounti1
{\bfseries}% hTheorem head fonti
{}% Punctuation after theorem headi
{0.4em}% hSpace after theorem headi2
\theoremstyle{kursiv}
\newtheorem{theorem}{Theorem}[section]
\newtheorem{lemma}[theorem]{Lemma}
\newtheorem{proposition}[theorem]{Proposition}
\theoremstyle{normal}
\newtheorem{definition}[theorem]{Definition}
\newtheorem{example}[theorem]{Example}
\newtheorem{rem}[theorem]{Remark}
\newtheorem*{acknowledgement}{Acknowledgement}
\newcommand{\cok}{\operatorname{cok}\nolimits}
\renewcommand{\epsilon}{\varepsilon}
\newcommand{\columnvec}[2]{\genfrac{[}{]}{0pt}{}{\,#1\,}{#2}}
\newcommand{\rowvec}[2]{[\,#1\;#2\,]}
\newcommand{\id}{\operatorname{id}\nolimits}
\newcommand{\ind}[1]{\ensuremath{\operatorname{ind}_{#1}\nolimits}}
\begin{document}

\title{Maximal exact structures on additive categories}

\author{~}

\renewcommand{\thefootnote}{}
\hspace{-1000pt}\footnote{\hspace{5.5pt}2010 \emph{Mathematics Subject Classification}: Primary 46M18; Secondary 18E10, 18G50.}
\hspace{-1000pt}\footnote{\hspace{5.5pt}\emph{Key words}: exact category, quasi-abelian, semi-abelian, homological methods in functional analysis.\vspace{1.6pt}}
\hspace{-1000pt}\footnote{$^{\text{a}}$\,Faculty IV - Mathematics, Building E, University of Trier, 54286 Trier, Germany, Phone: +49\hspace{1.2pt}651\linebreak\phantom{x}\hspace{12.5pt}201\hspace{1.2pt}349\hspace{1.2pt}5, Fax: +49\hspace{1.2pt}651\hspace{1.2pt}201\hspace{1.2pt}347\hspace{1.2pt}9, e-Mail: dsieg80@googlemail.com.\vspace{1.6pt}}
\hspace{-1000pt}\footnote{\hspace{-6.8pt}$^{\text{b},1}$\,Corresponding author: FB C -- Mathematics, University of Wuppertal, Gau\ss{}str.~20, 42119 Wuppertal,\linebreak\phantom{x}\hspace{12.5pt}Germany, Phone: +49\hspace{1.2pt}202\hspace{1.2pt}439\hspace{1.2pt}253\hspace{1.2pt}1, Fax: +49\hspace{1.2pt}202\hspace{1.2pt}439\hspace{1.2pt}372\hspace{1.2pt}4, eMail: wegner@math.uni-wuppertal.de.}

%\linebreak\phantom{x}\hspace{12.5pt}

\begin{abstract}\vspace{29pt}
We show that every additive category with kernels and cokernels admits a maximal exact structure. Moreover, we discuss two examples of categories of the latter type arising from functional analysis.
\end{abstract}

\maketitle

\begin{picture}(0,0)
\put(114,120){{\sc\MakeLowercase{Dennis Sieg}}\,$^{\text{a}}$ {\sc and} {\sc\MakeLowercase{Sven-Ake Wegner}}\,$^{\text{b}, 1}$}
\put(171,101){\small May 22, 2011}
\put(0,82){\small\it Dedicated to our teachers Klaus D. Bierstedt and Susanne Dierolf, who both died much too early.}

\end{picture}

\vspace{-30pt}

\section{Introduction}

The notion of an exact category in the context of additive categories was introduced by Quillen \cite{Quillen1973}. A comprehensive and self-contained exposition of the theory has recently been laid down by B\"uhler \cite{Buehler2009}. Quillen's definition is extrinsic, that is given a category $\mathcal{A}$, one has to specify a class $\mathcal{E}$ of composable morphisms, called short exact sequences, which satisfies certain axioms; in this case $\mathcal{E}$ is called an exact structure on $\mathcal{A}$. In the case of an additive category, there is always a smallest exact structure, namely the class formed by all short sequences which are split exact. On the other hand it is not clear if a given additive category admits a largest exact structure. However, if $\mathcal{A}$ is additive and has kernels and cokernels it frequently happens that the class of all kernel-cokernel pairs forms an exact structure which then by definition is the largest one. Categories of this type are called quasi-abelian and have 
been studied independently and under different names by several authors, see the historical remarks in Rump \cite[section 2]{Rump2008}; a detailed exposition can be found in the book of Schneiders \cite{Schneiders1999}. Quasi-abelian categories appear in different branches of mathematics. In particular they are the starting point for the use of homological methods in functional analysis since the categories of topological vector spaces and locally convex spaces as well as several subcategories, like Fr\'{e}chet or Banach spaces, are all quasi-abelian (but not abelian). The use of homological methods in functional analysis was started by Palamodov \cite{Palamodov1968, Palamodov1971}, re-invented by Vogt \cite{VogtLectures} and expanded by many others (see the book of Wengenroth \cite{Wengenroth} for detailed references and concrete applications). In parallel Prosmans \cite{Prosmans2000} studied the derived category of several quasi-abelian categories arising in functional analysis.
\smallskip
\\It is well-known that there are additive categories with kernels and cokernels which are not quasi-abelian, e.g. the category of complete topological vector spaces, cf.~\cite[section 3]{Prosmans2000}. Another notion in this context is that of being semi-abelian, see again \cite{Rump2008} for historical information. In the latter article Rump constructed a counter example to a conjecture of Raikov, which stated that each semi-abelian category is quasi-abelian. An earlier counterexample is contained in the article \cite{BonetDierolf2005} of Bonet, Dierolf and will be discussed in section \ref{Applications}. The failure of Raikov's conjecture implies in particular that a semi-abelian category in general is not exact in an intrinsic way.
\smallskip
\\Nonetheless, in this article we show that every additive category with kernels and cokernels admits a canonical largest exact structure, which can be described in terms of stability properties of morphisms under pullbacks and pushouts. This exact structure coincides with the class of all kernel-cokernel pairs if and only if the category is quasi-abelian. Thus, every additive category with kernels and cokernels (in particular, every semi-abelian category) admits an extrinsic but natural exact structure. 
\smallskip
\\Concerning applications in functional analysis, the above turns out to be very useful since there are many categories which are not quasi-abelian but additive with kernels and cokernels and thus accessible for homological algebra when endowed with an exact structure. In addition to the two examples which we will discuss in section \ref{Applications}, certain categories defined by projective and inductive limits which arise naturally in functional analysis, like that of PLB-, PLS- and PLN-spaces, will be treated in a forthcoming paper.

\section{Notation and Preparatory Results}\label{Notation and Preparatory Results}

In the whole section, $\mathcal{A}$ denotes an additive category with kernels and cokernels. Thus, $\mathcal{A}$ admits pushouts and pullbacks.  For later use we note the following (well-known) lemma, see e.g.~Richman, Walker \cite[Theorem 5]{RichmanWalker1977}.

\begin{lemma}\label{kernels and cokernels}\begin{compactitem} \item[(a)] If $g\colon Y\rightarrow Z$, $t\colon T\rightarrow Z$ are morphisms in $\mathcal{A}$ and $(P,p_T, p_Y)$ is their pullback, then there is a morphism $j\colon\ker g\rightarrow P$ making the diagram
\begin{diagram}[height=1.8em,width=2em]
\ker g              & \rTo^{j}   & P         & \rTo^{p_T}          & T        &         \\
\dTo^{\id_{\ker g}} &            &\dTo^{p_Y} & \text{\small\rm PB} & \dTo_{t} &         \\
\ker g              & \rTo_{i_g} & Y         & \rTo_{g}            & Z        &\;\;\;\: \\
\end{diagram}
commutative and being a kernel of $p_T$.\vspace{3pt}
\item[(b)] If $f\colon X\rightarrow Y$, $t\colon X\rightarrow T$ are morphisms in $\mathcal{A}$ and $(S,s_T, s_Y)$ is their pushout, then there is a morphism $c\colon S\rightarrow \cok f$ making the diagram
\begin{diagram}[height=1.8em,width=2em]
\;\;\;\:& X        & \rTo^{f}             & Y          & \rTo^{c_f} & \cok f              \\
        & \dTo^{t} & \text{\small\rm  PO} & \dTo_{s_Y} &            & \dTo_{\id_{\cok f}} \\
        & T        & \rTo_{s_T}           & S          & \rTo_{c}   & \cok f              \\
\end{diagram}
commutative and being a cokernel of $s_T$.
\end{compactitem}
\end{lemma}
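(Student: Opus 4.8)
The plan is to establish part (a) by a direct diagram chase exploiting the universal property of the pullback, and then to obtain part (b) for free by dualizing. Since $\mathcal{A}^{\mathrm{op}}$ is again additive with kernels and cokernels, and pushouts in $\mathcal{A}$ are pullbacks in $\mathcal{A}^{\mathrm{op}}$ while cokernels turn into kernels, statement (b) is precisely statement (a) read in $\mathcal{A}^{\mathrm{op}}$; so I would only spell out (a) and then invoke this duality.

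First I would construct $j$. Since $g\circ i_g=0=t\circ 0$, the pair of morphisms $i_g\colon\ker g\to Y$ and $0\colon\ker g\to T$ is compatible with the cospan defining the pullback $(P,p_T,p_Y)$; hence the universal property yields a unique $j\colon\ker g\to P$ with $p_Y\circ j=i_g$ and $p_T\circ j=0$. The first identity is exactly the commutativity of the left-hand square, and the second records that $j$ lands in the ``kernel direction'' of $p_T$, so that the diagram commutes as claimed.

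It then remains to show that $j$ is a kernel of $p_T$. We already have $p_T\circ j=0$. For the universal property, let $h\colon W\to P$ satisfy $p_T\circ h=0$. Then $g\circ(p_Y\circ h)=(t\circ p_T)\circ h=t\circ(p_T\circ h)=0$, so $p_Y\circ h$ factors through the kernel of $g$, giving $\tilde h\colon W\to\ker g$ with $i_g\circ\tilde h=p_Y\circ h$. To see that $j\circ\tilde h=h$, I would compare the two sides after applying $p_T$ and $p_Y$: one gets $p_T\circ(j\circ\tilde h)=0=p_T\circ h$ and $p_Y\circ(j\circ\tilde h)=i_g\circ\tilde h=p_Y\circ h$, whence the uniqueness clause of the pullback forces $j\circ\tilde h=h$. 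Uniqueness of $\tilde h$ follows because $i_g$ is monic, being a kernel: any competing factorization $\tilde h'$ satisfies $i_g\circ\tilde h'=p_Y\circ h=i_g\circ\tilde h$, so $\tilde h'=\tilde h$.

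The argument is essentially formal, and I do not expect a serious obstacle. The one point requiring care is the verification $j\circ\tilde h=h$, where one must resist trying to cancel $j$ — which need not be monic a priori — and instead invoke the uniqueness in the pullback's universal property; the monomorphy of $i_g$ is what subsequently delivers the uniqueness of the factorization. Together these establish that $j$ is a kernel of $p_T$, and dualizing gives that $c$ is a cokernel of $s_T$ in part (b).
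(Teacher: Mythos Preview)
Your argument is correct and is the standard one. The paper itself does not give a proof of this lemma; it simply records the statement as well-known and refers to Richman and Walker \cite[Theorem~5]{RichmanWalker1977}, so there is nothing to compare against beyond noting that your direct verification via the universal properties of the pullback and of $\ker g$ is exactly the expected route. One small remark: your caution that $j$ ``need not be monic a priori'' is in fact unnecessary here, since $p_Y\circ j=i_g$ is monic and hence so is $j$; but your alternative, invoking uniqueness in the pullback, is equally valid and arguably cleaner. The duality reduction for part~(b) is fine.
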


In what follows we adopt the notation of Richman, Walker and call a morphism $f$ a \textit{kernel} if there is a morphism $g$ such that $f$ is a kernel of $g$. \textit{Cokernels} are defined dually. It is easy to see that $f$ is a kernel if and only if it is a kernel of its cokernel and dually $f$ is a cokernel if and only if it is the cokernel of its kernel. In the notation of Schneiders \cite[Definition 1.1.1]{Schneiders1999} a morphism $f\colon X\rightarrow Y$ in $\mathcal{A}$ is called \textit{strict} if the induced morphism $\bar{f}\colon\cok\ker f\rightarrow\ker\cok f$ is an isomorphism. From his remarks in \cite[Remark 1.1.2]{Schneiders1999} it follows that a morphisms $f$ is a strict epimorphism if and only if it is a cokernel and that $f$ is a strict monomorphism if and only if it is a kernel. Note that strict morphisms are called \textit{homomorphisms} by Wengenroth \cite{Wengenroth} in analogy to the notation of K\"othe \cite[p.~91]{KoetheI} (see also \cite[p.~307]{MeiseVogtEnglisch}) for the 
category of locally convex spaces. Kelly \cite[p.~124]{Kelly1969} introduced the notion of \textit{regular epimorphisms} within arbitrary categories and notes \cite[p.~126]{Kelly1969} that in the case of an additive category with kernels and cokernels a morphism is a regular epimorphism if and only if it is cokernel. Clearly, the notations of Schneiders and Kelly are more general than those of Richman, Walker. However, the latter notation will turn out to be the most convenient one for our purpose in this article.

The following definition is essential for the main result of this article and was also used by Richman, Walker \cite[p.~522]{RichmanWalker1977}.

\begin{definition}\label{lifting}\begin{compactitem} \item[(a)] A cokernel $g\colon Y\rightarrow X$ in $\mathcal{A}$ is said to be \textit{semi-stable}, if for every pullback square 
\begin{diagram}[height=1.8em,width=2em]
P          & \rTo^{p_T}       & T        & \\
\dTo^{p_Y} & \text{\small PB} & \dTo_{t} & \\
Y          & \rTo_{g}         & X        & \\
\end{diagram}
the morphism $p_T$ is also a cokernel.\vspace{3pt}
\item[(b)]A kernel $f\colon X\rightarrow Y$ $\mathcal{A}$ is said to be \textit{semi-stable}, if for every pushout square
\begin{diagram}[height=1.8em,width=2em]
& X        & \rTo^{f}         & Y          \\
& \dTo^{t} & \text{\small PO} & \dTo_{s_Y} \\
& T        & \rTo_{s_T}       & S          \\
\end{diagram}
the morphism $s_T$ is also a kernel.
\end{compactitem}
\end{definition}

\begin{rem}\label{l-strict remark} Because pullbacks and pushouts are transitive, retractions are stable under pullbacks, coretractions are stable under pushouts and isomorphisms are stable under both, we obtain the following.\vspace{5pt}
\begin{compactitem}
\item[(a)] In the situation of \ref{lifting}, $p_T$ and $s_T$  are again semi-stable.\vspace{3pt}
\item[(b)] Retractions are semi-stable cokernels.\vspace{3pt}
\item[(c)] Coretractions are semi-stable kernels.\vspace{3pt}
\item[(d)] Isomorphisms are semi-stable cokernels and semi-stable kernels.
\end{compactitem}
\end{rem}

The notion of a semi-stable cokernel coincides with that of a \textit{totally regular epimorphism}, as defined by Kelly \cite[p.~124]{Kelly1969} in the case of an additive category with kernels and cokernels. Kelly remarks that this notion was defined by Grothendieck (under a different name), see \cite[p.~124]{Kelly1969}. For later use we recall the following stability properties due to Kelly.

\begin{proposition}\label{criterion}(Kelly \cite[Proposition 5.11 and 5.12]{Kelly1969}) Let $f\colon X\rightarrow Y$ and $g\colon Y\rightarrow Z$ be morphisms in $\mathcal{A}$. Put $h:=g\circ f\colon X\rightarrow Z$.
\begin{compactitem}\vspace{3pt}
\item[(a)] If $f$ and $g$ are semi-stable cokernels, then $h$ is a semi-stable cokernel.\vspace{3pt}
\item[(b)] If $f$ and $g$ are semi-stable kernels, then $h$ is a semi-stable kernel.\vspace{3pt}
\item[(c)] If $h$ is a semi-stable cokernel, then $g$ is a semi-stable cokernel.\vspace{3pt}
\item[(d)] If $h$ is a semi-stable kernel, then $f$ is a semi-stable kernel.\vspace{3pt}
\end{compactitem}
\end{proposition}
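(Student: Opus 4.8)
The plan is to prove the four statements by exploiting the transitivity of pullbacks and pushouts together with the pasting lemmas for these limits. Parts (a)/(b) and (c)/(d) are dual to each other, so I would establish (a) and (c) and obtain (b) and (d) by dualizing. The key observation is that semi-stability is a statement about \emph{every} pullback (resp.\ pushout) square over the morphism, and that pullbacks paste: given a pullback over $h=g\circ f$, I want to decompose it into a pullback over $g$ stacked on top of a pullback over $f$.

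\textbf{Part (a).} Suppose $f$ and $g$ are semi-stable cokernels and consider an arbitrary pullback square of $h=g\circ f$ along some $t\colon T\to Z$, yielding $(P,p_T,p_X)$ with $p_X\colon P\to X$ and $h\circ p_X = t\circ p_T$. First I would form the pullback of $g$ along $t$, producing an intermediate object $Q$ with projections $q_T\colon Q\to T$ and $q_Y\colon Q\to Y$. Since $g$ is a semi-stable cokernel, $q_T$ is a cokernel. Next I form the pullback of $f$ along $q_Y\colon Q\to Y$, producing $P'$ with projections onto $Q$ and $X$; since $f$ is a semi-stable cokernel, the projection $P'\to Q$ is a cokernel. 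By the pasting lemma for pullbacks, the outer rectangle obtained by stacking these two squares is exactly a pullback of $h=g\circ f$ along $t$, so $P'\cong P$ and the composite projection $P\to T$ equals $p_T$. But that composite is $q_T\circ(P'\to Q)$, a composition of two cokernels, hence by Proposition~\ref{criterion}(a) it is itself a (semi-stable) cokernel. Thus $p_T$ is a cokernel, as required. Since the pullback $t$ was arbitrary, $h$ is a semi-stable cokernel.

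\textbf{Part (c).} Assume $h=g\circ f$ is a semi-stable cokernel; I must show $g$ is a semi-stable cokernel. Take any pullback of $g$ along $t\colon T\to Z$, giving $(Q,q_T,q_Y)$. The goal is to realize $q_T$ as arising from a pullback of $h$, to which the hypothesis applies. The trick is that pulling $g$ back along $t$ and then precomposing appropriately relates to a pullback of $h$; concretely, since $f$ is a cokernel it is in particular an epimorphism, and I would use the universal property of the pullback $Q$ together with $f$ to compare the pullback of $h$ along $t$ with $Q$. I expect that the pullback of $h$ along $t$ factors through $Q$ via a morphism that is itself a cokernel (coming from $f$), so that $q_T$ becomes a composite of $p_T$ (a cokernel, by semi-stability of $h$) with this cokernel; then Proposition~\ref{criterion}(c) or (a) finishes the argument. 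Parts (b) and (d) follow by the formal dual of these arguments, replacing pullbacks by pushouts, cokernels by kernels, and invoking the dual halves of Proposition~\ref{criterion}.

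The main obstacle I anticipate is the bookkeeping in the pasting step, specifically verifying that the stacked pullback squares genuinely reassemble into the pullback of the composite $h$ and that the resulting projection is literally the $p_T$ from the given square (up to the canonical isomorphism of pullbacks). This requires careful use of the universal properties and the standard pullback-pasting lemma; once that identification is in hand, the conclusion is immediate from Proposition~\ref{criterion}. For part (c) the subtlety is that we are only given the pullback over $g$, not over $h$, so the harder direction is constructing the comparison cokernel from $f$ and ensuring it lets us transfer the semi-stability of $h$ down to $g$.
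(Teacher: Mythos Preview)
Your overall strategy---use the pullback pasting lemma to decompose a pullback of $h$ into stacked pullbacks of $g$ and $f$---is exactly the one the paper points to (it cites Kelly's Lemma~5.1 and Schneiders' Propositions~1.1.7--1.1.8 for precisely this). However, your argument contains a genuine circularity that leaves the essential content unproved.

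In part~(a), after correctly decomposing $p_T$ as the composite of the two pullback projections $P'\to Q$ and $q_T\colon Q\to T$, you write that this composite ``is itself a (semi-stable) cokernel by Proposition~\ref{criterion}(a)''. But Proposition~\ref{criterion}(a) is what you are proving. The pasting reduces the semi-stability of $h$ to the statement ``a composite of two semi-stable cokernels is a cokernel'', and \emph{that} is the real content you must supply. One way (close to Schneiders' argument): with $K=\ker(gf)$ and $J=\ker g$, check that the square with edges $k\colon K\to X$, $f\colon X\to Y$, $j\colon J\to Y$ and the induced $v\colon K\to J$ is a pullback; semi-stability of $f$ then forces $v$ to be a cokernel, hence an epimorphism, and from this one verifies directly that $gf$ is the cokernel of $k$.

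In part~(c) there are two problems. First, you write ``since $f$ is a cokernel it is in particular an epimorphism'', but in~(c) nothing whatsoever is assumed about $f$; only $h=gf$ is a semi-stable cokernel. Second, your concluding step again appeals to ``Proposition~\ref{criterion}(c) or~(a)'', which is circular. After pasting you obtain $p_T=q_T\circ u$ with $p_T$ a \emph{semi-stable} cokernel, but $u$ (the pullback of the arbitrary morphism $f$) need not be an epimorphism, so you cannot simply cancel. The actual argument exploits the semi-stability of $p_T$ once more: one pulls $p_T$ back along $q_T$ and uses the resulting cokernel together with the section coming from $u$ to exhibit $q_T$ as a cokernel. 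This extra use of semi-stability is the missing idea.
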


To end this section let us remark that \ref{criterion} can be proved in an elementary way by very slight modifications of the proofs of Schneiders \cite[Propositions 1.1.7 and 1.1.8]{Schneiders1999} and by using the fact that in a diagram of the form
\begin{diagram}[height=1.8em,width=2em]
X          & \rTo^{f}  & Y          & \rTo^{g}  & Z          \\
\dTo^{a} &           & \dTo^{b} & \text{\small PB}          & \dTo_{c} \\
X'         & \rTo_{f'} & Y'         & \rTo_{g'} & Z'         \\
\end{diagram}
the left square is a pullback if and only if this is true for the exterior rectangle (cf.~Kelly \cite[Lemma 5.1]{Kelly1969}).

\section{Main Result}

In this section we show that every additive category $\mathcal{A}$ with kernels and cokernels admits a largest exact structure $\mathcal{E}$. The proof is constructive; to define $\mathcal{E}$ we use the terminology explained in \ref{lifting}. Then we check that $(\mathcal{A},\mathcal{E})$ is indeed an exact category. For the sake of completeness let us recall the definition of the latter, where we stick to the notation of B\"uhler, cf~also \ref{conflation-rem}.

\begin{definition}\label{exact category}(B\"uhler \cite[Definition 2.1]{Buehler2009}) Let $\mathcal{A}$ be an additive category. A \textit{kernel-cokernel pair} $(f,g)$ in $\mathcal{A}$ is a pair of composable morphisms $f\colon X\rightarrow Y$, $g\colon Y\rightarrow Z$ such that $f$ is a kernel of $g$ and $g$ is a cokernel of $f$. If a class $\mathcal{E}$ of kernel-cokernel pairs on $\mathcal{A}$ is fixed, an \textit{admissible monomorphism} is a morphism $f$ such that there exists a morphism $g$ with $(f,g)\in\mathcal{E}$. \textit{Admissible epimorphisms} are defined dually. An \textit{exact structure} on $\mathcal{A}$ is a class $\mathcal{E}$ of kernel-cokernel pairs which is closed under isomorphisms and satisfies the following axioms.
\medskip
\\\begin{compactitem}
\item[{[E0]}] For each object $X$, $\id_X\colon X\rightarrow X$ is an admissible monomorphism.\vspace{3pt}
\item[{[E0$^{\text{op}}$]}] For each object $X$, $\id_X\colon X\rightarrow X$ is an admissible epimorphism.\vspace{3pt}
\item[{[E1]}] If $g\colon Y\rightarrow Z$ and $g'\colon Z\rightarrow V$ are admissible monomorphisms, then $g'\circ g$ is an admissible monomorphism.\vspace{3pt}
\item[{[E1$^{\text{op}}$]}] If $g\colon Y\rightarrow Z$ and $g'\colon Z\rightarrow V$ are admissible epimorphisms, then $g'\circ g$ is a admissible epimorphism.\vspace{3pt}
\item[{[E2]}] If $f\colon X\rightarrow Y$ is an admissible monomorphism and $t\colon X\rightarrow T$ is a morphism, then the pushout
\begin{diagram}[height=1.8em,width=2em]
\,& X        & \rTo^{f}         & Y          \\
  & \dTo^{t} & \text{\small PO} & \dTo_{s_Y} \\
  & T        & \rTo_{s_T}       & S          \\
\end{diagram}
of $f$ and $t$ exists and $s_T$ is an admissible monomorphism.\vspace{3pt}
\item[{[E2$^{\text{op}}$]}] If $g\colon Y\rightarrow Z$ is an admissible epimorphism and $t\colon T\rightarrow Z$ is a morphism, then the pullback
\begin{diagram}[height=1.8em,width=2em]
P          & \rTo^{p_T}       & T        \\
\dTo^{p_Y} & \text{\small PB} & \dTo_{t} \\
Y          & \rTo_{g}         & Z        \\
\end{diagram}
of $g$ and $t$ exists and $p_T$ is an admissible epimorphism.
\end{compactitem}

\smallskip

An \textit{exact category} is an additive category $\mathcal{A}$ together with an exact structure $\mathcal{E}$; the kernel-cokernel pairs in $\mathcal{E}$ are called \textit{short exact sequences}.
\end{definition}

\begin{rem}\label{conflation-rem}\begin{compactitem}
\item[(a)] In the notation used by Keller \cite{Keller1990}, admissible monomorphisms are called \textit{inflations}, admissible epimorphisms are called \textit{deflations} and short exact sequences are called \textit{conflations}, see B\"uhler \cite[2.5]{Buehler2009} for more (historical) information concerning this terminology.\vspace{3pt}
\item[(b)] Some authors use the term \textit{short exact sequence} for what we call a kernel-cokernel pair and then call the sequences in $\mathcal{E}$ the \textit{admissible short exact sequences} or conflations (see (a)).
\end{compactitem}

\end{rem}

In what follows the proof of [E1$^{\text{op}}$] was inspired by that of Keller \cite[Proposition after A.1]{Keller1990}.

\begin{theorem}\label{main theorem} If $\mathcal{A}$ is an additive category with kernels and cokernels then the class
\begin{eqnarray*}
\mathcal{E}\!\!\!&=\:\big\{\,(e,f)\:;\: (e,f) \text{ is a short exact sequence},\;e \text{ is a semi-}\\
            &\hspace{28pt}\text{stable kernel}\text{ and } f \text{ is a semi-stable cokernel}\,\big\}\hspace{-35pt}
\end{eqnarray*}
is an exact structure on $\mathcal{A}$. Moreover, $\mathcal{E}$ is maximal in the sense that all exact structures on $\mathcal{A}$ are contained within it. In the notation of Richman, Walker \cite[p.~524]{RichmanWalker1977} the pairs in $\mathcal{E}$ are called stable.
\end{theorem}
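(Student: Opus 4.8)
The plan is to prove the two assertions of the theorem separately: that $\mathcal{E}$ satisfies the six axioms of Definition~\ref{exact category}, and that it contains every exact structure. The maximality is the quicker half, so let me dispose of it first. If $\mathcal{E}'$ is any exact structure on $\mathcal{A}$ and $(e,f)\in\mathcal{E}'$, then $(e,f)$ is by definition a kernel-cokernel pair, so $e$ is a kernel and $f$ is a cokernel. Axiom [E2$^{\text{op}}$] applied to the admissible epimorphism $f$ says that \emph{every} pullback of $f$ is again an admissible epimorphism, hence in particular a cokernel; by Definition~\ref{lifting} this means exactly that $f$ is a semi-stable cokernel. Dually, [E2] shows that $e$ is a semi-stable kernel. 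Thus $(e,f)\in\mathcal{E}$, and $\mathcal{E}'\subseteq\mathcal{E}$.

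To see that $\mathcal{E}$ is an exact structure I would first observe that both the defining conditions on $\mathcal{E}$ and the list of axioms are self-dual: passing to $\mathcal{A}^{\text{op}}$ interchanges kernels with cokernels and semi-stable kernels with semi-stable cokernels, and sends each axiom to its opposite. It therefore suffices to check closure under isomorphism together with [E0], [E2] and [E1]. Closure under isomorphism is routine, since being a semi-stable kernel or cokernel is invariant under the isomorphisms occurring in the relevant squares. For [E0] it is enough to note that $\mathcal{E}$ contains the split conflation $X\xrightarrow{\id_X}X\to 0$, because $\id_X$ is a semi-stable kernel and $X\to 0$ a retraction, hence a semi-stable cokernel by Remark~\ref{l-strict remark}(b),(d). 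For [E2], given an admissible monomorphism $f$ and a morphism $t$, the pushout exists since $\mathcal{A}$ has pushouts; the cobase change $s_T$ is a kernel by the definition of $f$ being a semi-stable kernel and is again semi-stable by Remark~\ref{l-strict remark}(a), while Lemma~\ref{kernels and cokernels}(b) identifies its cokernel with $\cok f$, which is a semi-stable cokernel. Hence $s_T$ is an admissible monomorphism.

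The heart of the matter is the composition axiom, which I treat in the form [E1$^{\text{op}}$] following Keller. Let $g\colon Y\to Z$ and $g'\colon Z\to V$ be admissible epimorphisms. By Proposition~\ref{criterion}(a) the composite $g'g$ is a semi-stable cokernel, hence a cokernel, so that $\cok\ker(g'g)=g'g$; it remains to show that $j\colon N:=\ker(g'g)\to Y$ is a semi-stable kernel. I would first identify $N$ with the pullback of $g$ along the kernel $i_{g'}\colon\ker g'\to Z$ of $g'$, so that $j$ becomes one of the projections and the second projection $b\colon N\to\ker g'$ is, by Remark~\ref{l-strict remark}(a), a semi-stable cokernel with kernel $a\colon\ker g\to N$ by Lemma~\ref{kernels and cokernels}(a). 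Since $j\circ a=i_g$ is a semi-stable kernel, Proposition~\ref{criterion}(d) gives that $a$ is a semi-stable kernel, so that $(a,b)\in\mathcal{E}$.

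The main obstacle is the last step: concluding that $j$ itself is a semi-stable kernel. Pullbacks do not preserve semi-stable kernels in general, so the pullback description of $j$ cannot be used directly; this is where Keller's device enters. I would push out the admissible monomorphism $a\colon\ker g\to N$ along $i_g\colon\ker g\to Y$, so that by [E2] the cobase change is an admissible monomorphism, and the relation $j\circ a=i_g$ provides a retraction that identifies this pushout with the admissible monomorphism $\columnvec{j}{b}\colon N\to Y\oplus\ker g'$, whose cokernel computes to $\rowvec{g}{-i_{g'}}$. The delicate part is then to recover $j$ from $\columnvec{j}{b}$, that is, to verify directly---via Lemma~\ref{kernels and cokernels}(b) and the semi-stability of the deflations $g$ and $b$---that every cobase change of $j$ is a kernel, keeping the pullback and pushout descriptions compatible throughout. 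Once $j$ is known to be a semi-stable kernel we obtain $(j,g'g)\in\mathcal{E}$, so $g'g$ is an admissible epimorphism; [E1] then follows by duality.
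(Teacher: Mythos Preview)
Your overall strategy is the same as the paper's: maximality via [E2]/[E2$^{\text{op}}$], reduction by duality, [E2] (resp.\ [E2$^{\text{op}}$]) via Lemma~\ref{kernels and cokernels} together with Remark~\ref{l-strict remark}(a) and Proposition~\ref{criterion}(c)/(d), and [E1$^{\text{op}}$] following Keller by identifying $\ker(g'g)$ with the pullback of $g$ along $i_{g'}$ and passing to the admissible monomorphism into the direct sum. Two points deserve attention.

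First, in your verification of [E2] you write that Lemma~\ref{kernels and cokernels}(b) ``identifies its cokernel with $\cok f$, which is a semi-stable cokernel''. What Lemma~\ref{kernels and cokernels}(b) actually gives you is a new morphism $c\colon S\to\cok f$ with $c\circ s_Y=c_f$; it is the relation $c\circ s_Y=c_f$ together with Proposition~\ref{criterion}(c) that makes $c$ semi-stable. This is exactly how the paper argues (in the dual form for [E2$^{\text{op}}$], using $p_Y\circ k=f$ and \ref{criterion}(d)), and you should make that explicit.

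Second, and more importantly, the ``delicate part'' you flag at the end of [E1$^{\text{op}}$] is left as a promise, and the method you sketch (``verify directly \ldots\ that every cobase change of $j$ is a kernel'') is not the device that closes the argument. The paper does not analyse pushouts of $j$ one by one. Instead, having obtained the semi-stable kernel $p=\columnvec{-\alpha}{\,\,\,\,k}\colon K\to X'\oplus Y$ (your $\columnvec{j}{b}$, up to sign and order), it postcomposes with $r=\genfrac[]{0pt}{1}{f'\;\;0\;\,}{\;\,0\;\;\id_Y}\colon X'\oplus Y\to Z\oplus Y$, which is a semi-stable kernel because the square with $f'$ and the coproduct inclusions is a pushout. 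By Proposition~\ref{criterion}(b) the composite $r\circ p$ is a semi-stable kernel, and a direct computation gives
\[
r\circ p=\genfrac[]{0pt}{1}{-f'\circ\alpha}{k}=\genfrac[]{0pt}{1}{-g\circ k}{k}=\columnvec{-g}{\id_Y}\circ k.
\]
Now Proposition~\ref{criterion}(d) applied to this factorisation through $k$ yields that $k$ (your $j$) is a semi-stable kernel. This factorisation trick is precisely the missing idea in your proposal; without it, recovering semi-stability of $j$ from that of $\columnvec{j}{b}$ is genuinely nontrivial, since $j=\pi_Y\circ\columnvec{j}{b}$ expresses $j$ as a \emph{cokernel} postcomposed with a semi-stable kernel, to which \ref{criterion}(d) does not apply.
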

\begin{proof}We show that $\mathcal{E}$ is closed under isomorphisms. Let $(f,g)\in \mathcal{E}$ and let 
\begin{diagram}[height=1.8em,width=2em]
X          & \rTo^{f}  & Y          & \rTo^{g}  & Z          \\
\dTo^{i_X} &           & \dTo_{i_Y} &           & \dTo_{i_Z} \\
X'         & \rTo_{f'} & Y'         & \rTo_{g'} & Z'         \\
\end{diagram}
be a commutative square in $\mathcal{A}$ with isomorphisms $i_X$, $i_Y$ and $i_Z$. Then $(f',g')$ belongs to $\mathcal{E}$. In fact, every commutative square
\begin{diagram}[height=1.8em,width=2em]
E           & \rTo^{h}  & F           \\
\dTo^{\phi} &           & \dTo_{\psi} \\
E'          & \rTo_{h'} & F'          \\
\end{diagram}
in $\mathcal{A}$ with isomorphisms $\phi$ and $\psi$ is a pullback square as well as a pushout square, hence $f'$ is a semi-stable kernel and $g'$ a semi-stable cokernel by \ref{l-strict remark}.(a) and it is easy to see that $f'$ is the kernel of $g'$. This shows $(f',g')\in \mathcal{E}$.
\medskip
\\By B\"uhler \cite[Remark 2.4]{Buehler2009} (cf.~Keller \cite[App.~A]{Keller1990}) the axioms in the definition of exact category are somewhat redundant and it is in fact enough to check the axioms [E0], [E0$^{\text{op}}$], [E1$^{\text{op}}$], [E2] and [E2$^{\text{op}}$] in order to show that $\mathcal{E}$ is an exact structure.
\smallskip
\\{[E0]} and [E0$^{\text{op}}$] are satisfied by \ref{l-strict remark}.(d).
\smallskip
\\{[E2$^{\text{op}}$]} Let $(f,g)\in \mathcal{E}$, assume that
\begin{diagram}[height=1.8em,width=2em]
& P          & \rTo^{p_T}       & T        &  \\
& \dTo^{p_Y} & \text{\small PB} & \dTo_{t} &  \\
& Y          & \rTo_{g}         & Z        &\,\\
\end{diagram}
is a pullback square. According to \ref{kernels and cokernels}.(a) we get a kernel $k\colon X\rightarrow P$ such that
\begin{diagram}[height=1.8em,width=2em]
X              & \rTo^{k}   & P         & \rTo^{p_T}          & T        &         \\
\dTo^{\id_{X}} &            &\dTo^{p_Y} & \text{\small\rm PB} & \dTo_{t} &         \\
X             & \rTo_{f} & Y         & \rTo_{g}            & Z        &\;\;\;\: \\
\end{diagram}
is commutative. Since $g$ is a semi-stable cokernel the same is true for $p_T$ by \ref{l-strict remark}.(a). Thus, the first row in the above diagram is a kernel-cokernel pair. Moreover, $p_Y\circ k=f$ and thus by \ref{criterion}.(d) the morphism $k$ is a semi-stable kernel which shows $(k,p_T)\in \mathcal{E}$.
\medskip
\\{[E2]} Since $\mathcal{A}$ has biproducts and cokernels, the pushout of any two morphisms does exist. The pair $(f,g)$ is in $\mathcal{E}$ if and only if $(g^{\text{op}},f^{\text{op}})$ is in $\mathcal{E}^{\text{op}}$. Then [E2] follows from {[E2$^{\text{op}}$]} by duality.
\medskip
\\{[E1$^{\text{op}}$]} Let $(f,g),(f',g')\in \mathcal{E}$ be pairs such that $g'\circ g$ is defined and let $k\colon K\rightarrow Y$ be a kernel of $g'\circ g$. Then $g'\circ g$ is a semi-stable cokernel by \ref{criterion}.(a) and $(k, g'\circ g)$ is a  short exact sequence. Thus it remains to be shown that $k$ is a semi-stable kernel.
\smallskip
\\Since $g'\circ g\circ k=0$, there exists a unique $\alpha\colon K\rightarrow X'$ with $f'\circ\alpha=g\circ k$.
\medskip
\\Claim A. The diagram
\begin{diagram}[height=1.8em,width=2em]
K        & \rTo^{\alpha}     & X'         \\
\dTo^{k} & \text{\small (1)} & \dTo_{f'} \\
Y        & \rTo_{g}          & Z         \\
\end{diagram}
is a pullback square.
\medskip
\\Let $l_Y\colon L\rightarrow Y$ and $l_{X'}\colon L\rightarrow X'$ be morphisms with $f'\circ l_{X'}=g\circ l_Y$. Then $g'\circ g\circ l_Y=g'\circ f'\circ l_{X'}=0$, hence there exists a unique $\eta\colon L\rightarrow K$ with $l_Y=k\circ\eta$. This yields $f'\circ l_{X'}=g\circ l_Y=g\circ k\circ\eta=f'\circ\alpha\circ\eta$ and from this it follows $l_{X'}=\alpha\circ\eta$, since $f'$ is a monomorphism. The morphism $\eta$ is unique with this property, since $k$ is a monomorphism, hence (1) is a pullback square. Thus, Claim A is established.
\medskip
\\Claim B. Let $(f,g)\in \mathcal{E}$ and
\begin{diagram}[height=1.8em,width=2em]
P          & \rTo^{p_R}       & R        \\
\dTo^{p_Y} & \text{\small PB} & \dTo_{r} \\
Y          & \rTo_{g}         & Z        \\
\end{diagram}
be a pullback square. Then $(\columnvec{-p_R}{\,\,\,\,\,p_Y},\rowvec{r}{g})\in \mathcal{E}$.
\medskip
\\By \ref{kernels and cokernels}.(a) we have a commutative diagram 
\begin{diagram}[height=1.8em,width=2em]
X            & \rTo^{k} & P          & \rTo^{p_R} & R      &      \\
\dTo^{\id_X} &          & \dTo_{p_Y} &            & \dTo_r &      \\
X            & \rTo_{f} & Y          & \rTo_g     & Z      &\;\!\!\\
\end{diagram}
such that $k$ is a kernel of $p_R$ and by [E2$^{\text{op}}$] the pair $(k,p_R)$ is in $\mathcal{E}$. We show that
\begin{diagram}[height=1.8em,width=2em]
X          & \rTo^{f}       & Y        \\
\dTo^{p_Y} &\text{\small(2)\!\!\!\!}  & \dTo_{\omega_Y} \\
P          & \rTo_{\scriptscriptstyle\!\!\!\!\!\columnvec{-p_R}{\,\,\,\,\,p_Y}}         & R\oplus Y        \\
\end{diagram}
is a pushout, where $\omega_Y$ denotes the canonical morphism. Let $l_Y\colon Y\rightarrow L$ and $l_P\colon P\rightarrow L$ be morphisms such that $l_Y\circ f=l_P\circ k$. Then $(l_Y\circ p_Y-l_P)\circ k=0$ holds and there is a unique morphism $\gamma\colon R\rightarrow L$ with $\gamma\circ p_R=l_Y\circ p_Y-l_P$ since $p_R$ is the cokernel of $k$. This in turn gives rise to a unique morphism $\mu\colon R\oplus Y\rightarrow S$ with $\gamma=\mu\circ\omega_R$ and $l_Y=\mu\circ\omega_Y$, where $\omega_R$ denotes the canonical morphism. We compute $l_P=l_Y\circ p_Y-\gamma\circ p_R=l_Y\circ p_Y-\mu\circ\omega_R\circ p_R=\mu\circ(\omega_Y\circ p_Y-\omega_R\circ p_R)=\mu\circ\columnvec{-p_R}{\,\,\,\,\,p_Y}$. The uniqueness of $\mu$ follows from the fact that $\gamma$ is unique and that $p_R$ is an epimorphism. Hence, (2) is a pushout and therefore $\columnvec{-p_R}{\,\,\,\,\,p_Y}$ is a semi-stable kernel. It remains to show that $[r\; g]$ is a cokernel of $\columnvec{-p_R}{\,\,\,\,\,p_Y}$ since then the claim 
follows by [E2]. We show that the pullback diagram in Claim B is also a pushout. Let $l_R\colon R\rightarrow L$ and $l_Y\colon Y\rightarrow L$ be morphisms with $l_Y\circ p_R=l_Y\circ p_Y$. Then we have $l_Y\circ f=l_Y\circ p_Y\circ k=l_R\circ p_R\circ k=0$, hence the universal property of the cokernel $g$ gives rise to a unique morphism $\lambda\colon Z\rightarrow L$ with $l_Y=\lambda\circ g$. In addition we have $l_R\circ p_R=l_Y\circ p_Y=\lambda\circ g\circ p_Y=\lambda\circ r\circ p_R$ and therefore $l_R=\lambda\circ r$ since $p_R$ is an epimorphism. This establishes Claim B.
\smallskip
\\By Claim B we know that the pair $(p,q)$ of morphisms $p:=\columnvec{-\alpha}{\,\,\,\,k}\colon K\rightarrow X'\oplus Y$ and $q:=\rowvec{f'}{g}\colon X'\oplus Y\rightarrow Z$ belongs to $\mathcal{E}$. We put $r:=\genfrac[]{0pt}{1}{f'\;\; 0\;\,}{\;\,0 \;\;\id_Y}$ and obtain the commutative diagram
\begin{diagram}[height=1.7em,width=3em]
 X'& \rTo^{f'}          & Z&          \\
\dTo^{\omega_{X'}}       & \text{\small (3)} &\dTo_{\omega_Z}          &          \\
 \!X'\!\oplus Y                      & \rTo_{r}         & Z\oplus Y                       &\;\;\;\;\;\\
\end{diagram}
where $\omega_{X'}$ and $\omega_Z$ are the canonical morphisms.
\medskip
\\Claim C. (3) is a pushout square.
\medskip
\\Let $l_{X'\oplus Y}\colon X'\oplus Y\rightarrow L$ and $l_Z\colon Z\rightarrow L$ be morphisms with $l_Z\circ f'=l_{X'\oplus Y}\circ{\omega_{X'}}$. Denote $l_Y:=l_{X'\oplus Y}\circ\omega_Y$ where $\omega_Y\colon Y\rightarrow Z\oplus Y $ is the canonical morphism. We have $\delta:=\rowvec{l_Z}{l_Y}\colon Z\oplus Y\rightarrow L$ and thus
\begin{eqnarray*}
l_{X'\oplus Y}\circ\omega_{X'}&=&l_Z\circ f'=\delta\circ\omega_Z\circ f'=\delta\circ r\circ\omega_{X'},\\
l_{X'\oplus Y}\circ\omega_Y&=&\delta\circ\omega_Y=\delta\circ\omega_Y\circ\pi_Y\circ r\circ\omega_Y=\delta\circ (id_{Z\oplus Y}-\omega_Z\circ\pi_Z)\circ r\circ\omega_Y=\delta\circ r\circ\omega_Y
\end{eqnarray*}
where $\pi_Y\colon Z\oplus Y\rightarrow Y$ and $\pi_Z\colon Z\oplus Y\rightarrow Z$ are the canonical morphisms.
\smallskip
\\Hence the universal property of the coproduct yields $l_{X'\oplus Y}=\delta\circ r$. The uniqueness of $\delta$ follows from the universal property of the coproduct, which yields Claim C.
\smallskip
\\Now $r$ is a semi-stable kernel and by \ref{criterion}.(c) the composition $r\circ p$ is also a semi-stable kernel. We put $\sigma:=\columnvec{-g}{\id_Y}$ and obtain
\begin{eqnarray*}
\;\;\;\;\;\;\;\;\;\;\;\;\;\;\;\;\;\;\;\;\;\;\;\;r\circ p=\genfrac[]{0pt}{1}{f'\;\,0\,}{\;\,0\;\id_Y}\genfrac[]{0pt}{1}{-\alpha}{k}=\genfrac[]{0pt}{1}{-f'\circ\alpha}{k}=\genfrac[]{0pt}{1}{-g\circ k}{k}=\sigma\circ k
\end{eqnarray*}
Since $r\circ p$ is a semi-stable kernel, it follows from \ref{criterion}.(d), that $k$ is a semi-stable kernel, which yields that $(k, g'\circ g)\in\mathcal{E}$ and thus that $\mathcal{E}$ is an exact structure on $\mathcal{A}$.
\medskip
\\It remains to check the maximality of $\mathcal{E}$. Let $\mathcal{E}'$ be a second exact structure on $\mathcal{A}$ and let $(f,g)\in\mathcal{E}'$. If
\begin{diagram}[height=1.8em,width=2em]
P          & \rTo^{p_T} & T        \\
\dTo^{p_Y} & \text{\small PB}  & \dTo_{t} \\
Y          & \rTo_{g}   & Z        \\
\end{diagram}
is a pullback square, the morphism $p_T$ is an admissible epimorphism by [E2$^{\text{op}}$], hence it is a cokernel (of its kernel), hence $g$ is a semi-stable cokernel. Analogously, by [E2] the morphism $f$ is a semi-stable kernel, which shows $(f,g)\in \mathcal{E}$. Hence we have $\mathcal{E}'\subseteq\mathcal{E}$.
\end{proof}

\section{Applications and Examples}\label{Applications}

In this last section we present two examples of additive categories with kernels and cokernels arising in functional analysis. We use standard locally convex notations and theory as presented in Meise and Vogt \cite{MeiseVogtEnglisch}, Jarchow \cite{Jarchow},  Bonet, P\'{e}rez Carreras \cite{BPC} and Floret, Wloka \cite{FloretWloka}. To simplify the notation, let us denote by LCS the category of locally convex topological vector spaces with linear and continuous maps as morphisms and by HD-LCS the full subcategory of spaces whose topology is Hausdorff.

\begin{example}\label{BOR} Let BOR be the full subcategory of LCS consisting of bornological spaces (cf.~\cite[\textsection{}\,23, 1.5 and \textsection{}\,11, 2.]{FloretWloka}). BOR is additive and since quotients of bornological spaces are again bornological, it has cokernels (cf.~\cite[\textsection{}\,23, 2.9]{FloretWloka}). Let $f\colon E\rightarrow F$ be a linear and continuous map in BOR. We consider the linear space $f^{-1}(0)$. If we consider $f$ as a morphism in LCS, $f^{-1}(0)$ endowed with the topology induced by $E$ would be a kernel of $f$. Unfortunately, this space is in general not bornological. However, $f^{-1}(0)$ endowed with the associated bornological topology w.r.t.~the induced one, (cf.~\cite[\textsection{}\,11, 2.2]{FloretWloka}) is bornological, we will denote this space by $f^{-1}(0)^{\text{BOR}}$. It is easy to check that this space together with the inclusion mapping is a kernel of $f$ in BOR.
\smallskip
\\From the above it follows that for an arbitrary morphism $f\colon E\rightarrow F$ in BOR, the cokernel of the kernel of $f$ is $E/f^{-1}(0)$ endowed with the quotient topology and that the kernel of the cokernel of $f$  is $f(E)^{\text{BOR}}$ w.r.t.~the topology induced by $F$. Thus, the canonical morphism $\bar{f}\colon \cok\ker f\rightarrow \ker\cok f$ is bijective and it is easy to see that it thus is both a monomorphism and epimorphism. Thus, BOR is semi-abelian in the notation of \cite{Rump2008}.
\smallskip
\\However, BOR is not quasi-abelian in the sense of \cite{Rump2008} and \cite{Schneiders1999}; Bonet, Dierolf \cite{BonetDierolf2005} constructed morphisms $f\colon E\rightarrow F$ and $g\colon G\rightarrow F$ such that $g$ is a cokernel but in the pullback square
\begin{diagram}[height=1.8em,width=2em]
P    & \rTo^{p_E}   & E\\
\dTo^{p_G} &  \text{\small PB}      & \dTo_f\\
G    & \rTo_g & F \\
\end{diagram}
$p_E$ fails to be a cokernel, that is $g$ is not semi-stable. Hence, the article of Bonet, Dierolf \cite{BonetDierolf2005} provides indeed a counter example to Raikov's conjecture (cf.~\cite{Rump2008}).
\end{example}

Theorem \ref{main theorem} provides a natural exact structure on BOR and hence enables us to use homological methods within this category, although BOR is neither quasi-abelian nor extension closed in LCS (see Roelcke, Dierolf \cite[Example 2.15]{RoelckeDierolf1981}). The same is true for the following example which is in a functional analytic sense more natural but unfortunately has even weaker properties in the sense of category theory.

\begin{example}\label{HD-BOR} By HD-BOR we denote the full subcategory of HD-LCS consisting of bornological spaces. Clearly, HD-BOR is a full subcategory of BOR. Since the preimage of zero under a continuous map between separated spaces is closed and since the associated bornological topology is finer than the starting topology (see \cite[\textsection{}\,11, 2.2]{FloretWloka}), the kernels in HD-BOR are those of BOR. Concerning cokernels this is not true. Let $f\colon E\rightarrow F$ be a morphism in HD-BOR. Then the cokernel of $f$ is the space $F/\overline{f(E)}$ endowed with the quotient topology. Hence, the cokernel of the kernel of $f$ is the space $E/f^{-1}(0)$ endowed with the quotient topology and the kernel of the cokernel of $f$ is $\overline{f(E)}^{\text{BOR}}$ w.r.t.~the topology induced by $F$.\vspace{-10pt}
\smallskip
\\By an example due to Grothendieck \cite{Grothendieck1954} (for details see Bonet, P\'{e}rez Carreras \cite[8.6.12]{BPC}), there exists a strict LB-space $(F,t)=\ind{n}(F_n,t_n)$ and a closed subspace $H\subseteq F$ such that there exists $u\in (H,s)'\backslash (H,t|_{H})'$ where $(H,s):=\ind{n}(H\cap F_n,t_n|_{H\cap F_n})$. Based on this example one may construct a mapping $f$ between spaces in HD-BOR, such that $\bar{f}$ is not an epimorphism, that is $\bar{f}(E)=f(E)$ is not dense in $\overline{f(E)}^{\text{BOR}}$. Hence the category HD-BOR is not even semi-abelian in Rump's notation.
\end{example}

\vspace{2pt}

\begin{acknowledgement}
The authors thank L.~Frerick and S.~Dierolf for many fruitful discussions during a seminar at the University of Trier organized by the first named author. Moreover, they thank J.~Bonet who explained the example used in \ref{HD-BOR} to the last named author and T.~B\"uhler for many helpful comments. Finally, they thank the referees for many detailed hints which provided simplifications of the proofs and for making them aware of the notation and the results in the articles \cite{Kelly1969} and \cite{RichmanWalker1977}.
\end{acknowledgement}

\vspace{2pt}

\end{document}